\newtheorem{theorem}{Theorem}[section]
\newtheorem{lemma}[theorem]{Lemma}
\newtheorem{corollary}[theorem]{Corollary}
\newtheorem{proposition}[theorem]{Proposition}
\newcommand{\Z}{\mbox{$\mathbb Z$}}
\newcommand{\C}{\mbox{$\mathbb C$}}
\newcommand{\Q}{\mbox{$\mathbb Q$}}
\newcommand{\A}{\mbox{$\mathbb A$}}
\newcommand{\ol}{\overline}
\newcommand{\p}{\mbox{$\mathbb P$}}
\newcommand{\sk}{\ol \kappa}
\newcommand{\s}{\widetilde{S}}
\newcommand{\rar}{\rightarrow}
\title[rational cuspidal curves on $\Q$-homology projective planes]{A note on rational cuspidal curves on $\Q$-homology projective planes 
\footnote{Mathematics Subject Classification: 14J26, 14R25, 14H45}}
\author{R.V. Gurjar, DongSeon Hwang and Sagar Kolte}
\begin{document}

\begin{abstract} 
We generalize results by Wakabayashi and Orevkov  about rational cuspidal curves on the projective plane  to that  on $\Q$-homology projective planes. It turns out that the result is exactly the same as the projective plane case under suitable assumptions.  We also provide examples which demonstrate sharpness of the results. The ambient surface is singular in these examples. 
\end{abstract}

\maketitle

\section{\bf Introduction} 
The study of rational cuspidal plane curves has had a long and interesting history.  One approach is to apply the theory of open surfaces to the complement $\p^2-C$. In particular, Wakabayashi proved some sharp bounds on the number of cusps of $C$ in terms of the log Kodaira dimension of $\p^2-C$. 

\begin{theorem}$($\cite{waka78}$)$
Let $C$ be a rational cuspidal plane curve. Then, we have the following:
	\begin{enumerate}
		\item If $\sk(\p^2-C) = -\infty$, then $C$ has at most one cusp. 
		\item If $\sk(\p^2-C) \leq 1$, then $C$ has at most two cusps.
	\end{enumerate}
\end{theorem}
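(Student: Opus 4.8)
The plan is to translate the statement into the theory of open algebraic surfaces applied to the affine surface $U := \p^2 \setminus C$. Since $C$ is a plane curve it is ample, so $U$ is smooth and affine; and since $C$ is rational with only unibranch (cuspidal) singularities, the normalization $\p^1 \to C$ is a homeomorphism, whence $e(C) = 2$ and $e(U) = e(\p^2) - e(C) = 1$; as $H_1(U;\Z) \cong \Z/\deg C$ is finite and $U$ is a smooth affine surface, a Betti number count gives $b_1(U;\Q) = b_2(U;\Q) = 0$, i.e.\ $U$ is a $\Q$-homology plane. Fix the minimal embedded resolution $\pi : V \to \p^2$ of the cusps of $C$: then $V$ is a smooth projective rational surface, $D := \pi^{-1}(C)_{\mathrm{red}} = \wt C + \sum_j \mathcal E_j$ is a reduced SNC divisor whose dual graph is a \emph{tree} of rational curves ($\wt C \cong \p^1$ the proper transform, the $\mathcal E_j$ the exceptional curves), $V \setminus D \cong U$, and the number of cusps of $C$ is the number of connected components of $D - \wt C$.

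For part (1), assume $\sk(U) = -\infty$. By the structure theorem for smooth affine surfaces of log Kodaira dimension $-\infty$ (Miyanishi--Sugie, Fujita, Russell), $U$ carries an $\A^1$-fibration $f : U \to B$ over a smooth curve $B$, which we may take to have connected fibres (Stein factorization). Then $B$ is rational; it is not $\p^1$ (an $\A^1$-fibration over $\p^1$ would force $e(U) \ge e(\p^1) = 2 > 1$) and it has neither positive genus nor two or more punctures (else $H_1(B;\Q) \ne 0$, yet $H_1(U;\Q) = 0$ surjects onto it), so $B \cong \A^1$. Because $U$ is affine, every connected fibre of $f$ is irreducible, of the form $m_i\A^1$, so $f$ is an $\A^1$-bundle over $\A^1$ away from finitely many multiple fibres. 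After blowing up the base points of the corresponding pencil on $\p^2$, $f$ extends to a $\p^1$-fibration $\bar f : V' \to \p^1$ with $U = V' \setminus D'$ for an SNC tree $D'$; a general fibre $\cong \p^1$ meets $U$ in $\A^1$, so $D'$ has exactly one horizontal component, a section $\Sigma$, all others being fibral. The endgame is to run through the possible shapes of $D'$ fibre by fibre --- the fibre at infinity and the fibral subtrees over the multiple fibres are obtained from smooth fibres by blow-ups, and contracting $V' \to \p^2$ must recover exactly $C$ together with its cusps --- and to conclude that the combinatorics of a single section sitting inside the tree $D'$ leaves room for at most one cusp.

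For part (2), assume $\sk(U) \le 1$; the case $-\infty$ is settled, so suppose $\sk(U) \in \{0,1\}$. Then $U$ is a rational $\Q$-homology plane not of log general type, and by the classification of such surfaces (Kawamata's structure theorem for $\sk = 1$ gives a fibration whose general fibre has $\sk = 0$, necessarily $\C^*$ since $U$ is affine and rational; the structure theory of $\Q$-homology planes of $\sk = 0$ is likewise explicit) $U$ admits a $\C^*$-fibration $f : U \to B$ (or an $\A^1$-fibration, already handled). Again $B \cong \A^1$ or $\p^1$; a general fibre $\C^*$ has $e = 0$, so $1 = e(U) = \sum_b e\bigl(f^{-1}(b)_{\mathrm{red}}\bigr)$ shows the degenerate fibres contribute exactly one unit of Euler characteristic in total, which by the classification of degenerate fibres of $\C^*$-fibrations on affine surfaces (chains and forks of affine lines) sharply restricts their number and shape. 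Completing to a projective fibration and reading off $D'$ as in part (1) then bounds the number of cusps by two, the extreme case being two cusps forced by the two ends ($0$ and $\infty$) of a degenerate $\C^*$-fibre being glued into $C$.

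The existence of the fibration is immediate from the cited structure theorems; essentially all the work --- and the main obstacle --- is the last step of each part, namely converting the fibration data (number and shape of degenerate fibres, which components of $D$ --- in particular $\wt C$ --- are horizontal versus fibral, the multiplicities) into a \emph{sharp} count of cusps. The two delicate points are: (a) ruling out configurations in which cusps are spread over several fibres, which uses both the connectedness and tree structure of $D$ and the exact list of degenerate fibres of relatively minimal $\p^1$- and $\C^*$-fibrations; and (b) keeping track of the multiple fibres of the affine $\A^1$- or $\C^*$-fibration, which disappear as multiple fibres of the completed projective fibration but survive in the fibral part of $D$. This is also exactly where the hypotheses $\sk = -\infty$, respectively $\sk \le 1$, are used in a sharp way.
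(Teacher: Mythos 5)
Your setup is the right one and matches the strategy this paper uses for its generalization (Theorem \ref{main}(1),(2)): reduce to the minimal embedded resolution $V$, with $D$ an SNC tree, and invoke the structure theorems --- Miyanishi--Sugie--Fujita--Russell giving an $\A^1$-fibration when $\sk=-\infty$, and Kawamata giving a $\C^*$-fibration when $\sk=1$. But the proof has a genuine gap, and you name it yourself: the ``endgame'' of converting the fibration data into the bound on the number of cusps is exactly the content of the theorem, and you never carry it out. The missing idea is a single concrete combinatorial invariant: in the minimal SNC resolution, \emph{each cusp of $C$ contributes a $(-1)$-curve adjacent to $\wt C$ with branching number $3$ in $D$} (the last exceptional curve of that cusp's resolution), and no two of these are adjacent. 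Once you have this, the counts fall out immediately from two standard facts: (i) in a degenerate fiber of a $\p^1$-fibration every $(-1)$-curve meets at most two other components of the fiber (\cite[Ch.~I, Lemma 4.4.1]{miya81}), so such a branching $(-1)$-curve cannot lie in a fiber unless enough of its neighbours are horizontal; and (ii) the horizontal part of $D$ consists of one component (a section) for an $\A^1$-fibration and two for a $\C^*$-fibration. Hence at most one, respectively two, such $(-1)$-curves can exist, i.e.\ at most one, respectively two, cusps. This is the entire argument of the paper's Lemma \ref{1sing} and of the $\sk=1$ case; your ``run through the possible shapes of $D'$ fibre by fibre'' gestures at it but does not isolate the mechanism, and without it the claim that the combinatorics ``leaves room for at most one cusp'' is an assertion, not a proof.

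A second, independent problem is your treatment of $\sk(U)=0$. You assume a $\C^*$-fibration exists there and propose an Euler-characteristic count of degenerate fibers; but Kawamata's theorem only produces a fibration when $\sk=1$, and the existence of a $\C^*$- or $\A^1$-fibration on a $\Q$-homology plane of $\sk=0$ is not something you may take for granted. The paper avoids this entirely: for $\sk(\s-D)=0$ it argues via Fujita's peeling theory, showing that three cusps force $Bk(D)\neq N$ (Lemma \ref{Fujita2}), producing an exterior $(-1)$-curve via Lemma \ref{Fujita1}, and then checking that contracting it cannot map $D$ --- with its three branching $(-1)$-curves attached to $\wt C$ --- onto any of the configurations in Fujita's list. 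Your Euler-characteristic bookkeeping, even where a fibration does exist, also does not by itself distinguish two cusps from three: the passage from ``the degenerate fibers contribute $e=1$ in total'' to ``at most two connected components of $D-\wt C$'' again requires tracking which components of $D$ are horizontal, which is the same missing step as above.
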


Recall that for a quasi-projective variety $X$ the log Kodaira dimension $\sk(X)$ is defined to be the Kodaira dimension $\kappa(\widetilde{X}, K_{\widetilde{X}} +D)$ of the pair $(\widetilde{X}, D)$ where $\widetilde{X}$ is a projectivization  of $X$ such that the boundary $D := \widetilde{X} -X$ is  a divisor with simple normal crossings.\\

Orevkov showed that the log Kodaira dimension of $\p^2-C$ is always non-zero.

\begin{theorem}$($\cite[Theorem B(c)]{orevkov02}$)$ 
Let $C$ be a rational cuspidal plane curve. Then, $\sk(\p^2-C) \neq 0$.
\end{theorem}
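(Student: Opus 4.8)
The plan is to argue by contradiction: assume $C$ is a rational cuspidal plane curve with $\sk(\p^2-C)=0$. Since $C$ is irreducible of positive degree, $V:=\p^2-C$ is a smooth affine surface, and since $C$ is rational with only unibranch singularities one has $e(C)=2$, hence $e(V)=1$; together with $H_1(V;\Z)\cong\Z/(\deg C)\Z$ this shows $V$ is $\Q$-acyclic, so $\mathrm{Pic}(V)$ is finite. Thus $V$ is a $\Q$-homology plane, and the hypothesis places it among the $\Q$-homology planes of log Kodaira dimension $0$.

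First I would pass to a minimal embedded resolution $\pi\colon X\to\p^2$ of the cusps of $C$, so that the reduced total transform $D:=\pi^{-1}(C)_{\mathrm{red}}$ is a simple normal crossing divisor whose dual graph is a tree: the smooth rational curve $\wt C$ together with, for each cusp, the connected chain/tree of exceptional rational curves resolving it. Then $V=X-D$, $X$ is a smooth rational surface, and $\sk(X-D)=\kappa(X,K_X+D)=0$. It is convenient to invoke the recalled theorem of Wakabayashi at this point: since $\sk(\p^2-C)=0\le 1$, part (2) forces $C$ to have at most two cusps, so $D$ is one of a very restricted list of configurations.

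Next I would run the peeling / almost-minimal-model procedure (Fujita, Miyanishi--Tsunoda) on $(X,D)$: contract successively the $(-1)$-curves meeting the rest of the boundary in at most one point and discard superfluous components, reaching an almost minimal model $(Y,B)$ with $Y-B\cong V$ on which the bark-reduced divisor $K_Y+B^{\#}$ has a Zariski decomposition $P+N$ with $P$ nef. The assumption $\sk=0$ then forces the positive part to be numerically trivial, $P\equiv 0$, so $K_Y+B^{\#}\equiv N$ is numerically an effective divisor supported on the boundary. Feeding this numerical triviality into the constraints coming from $\Q$-acyclicity — the Picard number and torsion of $Y$, the negative-definiteness and shape of the intersection matrix of $B$, and $e(V)=1$ — and, where linear algebra does not suffice, the logarithmic Bogomolov--Miyaoka--Yau inequality $\bar{c}_1^2\le 3\bar{c}_2$ with $\bar{c}_2=e(V)=1$, one pins down the structure of $V$: any $\Q$-homology plane of log Kodaira dimension $0$ must be one of the surfaces in the Gurjar--Miyanishi classification, each of which admits either an Iitaka fibration with explicitly bounded singular fibers or a $\C^{*}$- or $\C^{**}$-fibration of a prescribed type.

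The final step, which I expect to be the main obstacle, is to check that none of these model surfaces arises as the complement of a single rational cuspidal curve: one must verify that no boundary divisor $D$ of the special ``one or two cusps attached to a smooth rational curve'' shape, sitting inside a rational surface with the Picard rank and torsion imposed by $\Q$-acyclicity, can satisfy $K_X+D\equiv(\text{effective, supported on }D)$; in each case an explicit Zariski-decomposition computation should instead yield $\sk\in\{-\infty,1,2\}$. The delicate part will be the finitely many sporadic configurations of small degree with one or two cusps of low multiplicity, where the numerical inequalities are not by themselves decisive and one must either use the orbifold Euler-number refinement of BMY or analyze the relevant $\C^{*}$- or $\C^{**}$-fibration on $V$ directly in order to exclude them.
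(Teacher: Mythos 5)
Your setup is sound and broadly parallel to what the paper actually does for its generalization (Theorem 1.3(3)): reduce to the statement that $\p^2-C$ is a $\Q$-acyclic smooth affine surface, use Wakabayashi's bound to limit $C$ to at most two cusps, and then try to show that no $\Q$-homology plane of log Kodaira dimension $0$ can be the complement of a rational cuspidal curve. But the proposal stops exactly where the proof has to begin. Your final paragraph says that ``one must verify'' that none of the classified $\sk=0$ surfaces arises this way, that an explicit computation ``should instead yield'' $\sk\in\{-\infty,1,2\}$, and that the sporadic low-degree configurations are ``delicate.'' That verification is the entire content of the theorem; nothing in the proposal supplies a mechanism that actually produces the contradiction, and the appeal to $P\equiv 0$ plus BMY with $\bar c_2=1$ is not by itself decisive (as you concede). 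As written, this is a plan, not a proof.

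For comparison, the paper closes this gap by a concrete structural argument rather than a case-by-case sweep of a classification list: by Palka's results, a $\Q$-homology plane whose smooth locus has $\sk=0$ either is an exceptional $\Q$-homology plane (excluded here because the boundary is the SNC resolution of a cuspidal rational curve) or carries a \emph{twisted} $\C^*$-fibration whose fiber at infinity has type $[2,1,2]$. The paper's Lemma 4.1 then shows that the boundary divisor $D$ of a uni- or bi-cuspidal curve --- a tree with one or two $(-1)$-curves of branching number three attached to the proper transform $C'$ --- cannot support such a fibration: one tracks where the base points could lie, identifies the fiber at infinity inside $D$, and derives either a forbidden $2$-section configuration or $\sk=-\infty$ via Lemma 2.1 and Lemma 2.5. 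If you want to complete your argument along your own lines, you would need to replace ``should instead yield'' with an analysis of this kind, i.e., an explicit incompatibility between the shape of $D$ and the fibration (or Iitaka fibration) structure forced by $\sk=0$.
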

 
The aim of this note is to generalize these results to the case where the ambient surface is a $\Q$-homology projective plane instead of the usual projective plane. Here, a \emph{\Q-homology projective plane} is defined as a normal projective surface $S$ with quotient singularities having the same rational homology as $\p^2$. It turns out that the generalized results give us the same conclusions as the results of Wakabayashi and Orevkov under the assumption that the cuspidal curve $C$ does not pass through the singularities of $S$. \\

\begin{theorem}\label{main}
Let $C$ be a rational cuspidal curve on a $\Q$-homology projective plane $S$. Assume that $C$ is contained in the smooth locus $S^0$ of $S$. Then, we have the following:
	\begin{enumerate}
		\item If $\sk(S-C)= -\infty$, then $C$ has at most one cusp.   
		\item If $\sk(S-C) \leq 1$, then $C$ has at most two cusps.  
		\item $\sk(S^0-C) \neq 0 $.
	\end{enumerate} 
\end{theorem}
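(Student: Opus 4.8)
I would prove Theorem \ref{main} by showing that $S-C$ --- equivalently, since quotient singularities are log terminal, $S^0-C$, which has the same log Kodaira dimension --- shares with the complement of a rational cuspidal curve in $\p^2$ precisely those features that enter the proofs of Wakabayashi and Orevkov, and then invoking those proofs in the form valid for open surfaces with quotient singularities.

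The first step is to set up this dictionary. Because $\operatorname{Pic}(S)\otimes\Q\cong\Q$ and $C$ is effective and nonzero, $C^2>0$ and $C$ is an ample $\Q$-Cartier divisor, so $S-C$ is an affine normal surface whose only singularities are those of $S$; and --- this is where the hypothesis $C\subset S^0$ enters --- these lie off $C$, so that on a log resolution the boundary divisor $D$ is the disjoint union of the cuspidal configuration of $C$ and the exceptional trees of the singularities of $S$. Moreover $S$, being a compact rational-homology manifold with $b_0=b_2=b_4=1$ and $b_1=b_3=0$, has $e(S)=3$, while the cusps of $C$ are unibranch, so the normalisation $\p^1\to C$ is a homeomorphism and $e(C)=2$; hence $e(S-C)=1$ and, from the homology exact sequence of the pair $(S,S-C)$ together with $C^2\neq0$, $b_1(S-C)=0$ --- exactly the numerics of $\p^2$ minus a rational cuspidal curve.

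Granting this, I would run the classical arguments. For (1): if $\sk(S-C)=-\infty$ then, $S-C$ being affine with $b_1=0$, the structure theory of open algebraic surfaces --- applied on the minimal resolution, with the exceptional trees of the singularities of $S$ peeled off in the sense of Miyanishi--Tsunoda --- shows that $S-C$ is affine-ruled (in particular $S$ is rational), and Wakabayashi's count of the cusps against the resulting fibration and $e(S-C)=1$ leaves at most one cusp, exactly as in \cite{waka78}. For (2): $\sk(S-C)\le1$ yields instead an $\A^1$-, $\C^*$- or elliptic fibration, and the same Euler-characteristic bookkeeping with $e(S-C)=1$ bounds the number of cusps by two. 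For (3): assuming $\sk(S^0-C)=0$, peeling both the singularity trees and the tips of the cuspidal twigs produces an almost minimal model of the pair with numerically trivial log canonical divisor; by the classification of open surfaces of log Kodaira dimension $0$ this model lies in a short explicit list, and in each case the cuspidal twigs that $D$ must contain, together with affineness and $e(S-C)=1$, contradict the admissible boundary configurations --- which is Orevkov's contradiction in \cite{orevkov02}. (None of this requires the ambient surface to be rational, though in case (1) rationality is forced.)

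The one genuinely new point --- and the step I expect to demand the most care --- is making precise that the quotient singularities of $S$ are invisible to all of the above: that deleting their exceptional trees from the boundary changes neither $\sk$ nor the outcome of the fibration and almost-minimal-model analyses, and in particular leaves $e(S-C)=1$ and the cuspidal part of $D$ untouched. This is what the peeling theory of Miyanishi--Tsunoda delivers, and it is legitimate here because the singularities are log terminal and, by $C\subset S^0$, disjoint from $C$; had $C$ passed through a singular point, the cuspidal twigs and the singularity twigs would be linked and the bounds would genuinely change. With that reduction secured, every remaining estimate is verbatim the one in \cite{waka78} or \cite{orevkov02}.
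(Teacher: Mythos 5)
Your reduction rests on the claim that the quotient singularities of $S$ are ``invisible'': that $\sk(S-C)=\sk(S^0-C)$ because the singularities are log terminal, and that deleting their exceptional trees from the boundary changes neither $\sk$ nor the fibration analysis. This is false, and the paper's own Example B is the counterexample: there $C$ lies in $S^0$ and $\sk(S-C)=\sk(\s-D)=0$ while $\sk(S^0-C)=\sk(\s-D-E)=1$. Log terminality controls the $\Q$-divisor $K_{\s}+D+\sum a_iE_i$ with fractional coefficients $a_i<1$, not the two reduced-boundary log Kodaira dimensions, and enlarging the boundary from $D$ to $D+E$ can strictly increase $\sk$. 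Since part (3) is precisely the statement that distinguishes $S^0-C$ from $S-C$ (the analogous statement for $S-C$ being false), an argument that conflates them cannot prove it. A second, related gap is the Euler-characteristic bookkeeping: the counts in Wakabayashi and Orevkov are made on a smooth model, and there the numerics are \emph{not} those of $\p^2-C$. One has $e(\s-D)=1+\sum_i\bigl(e(E_i)-1\bigr)$ and $e(\s-D-E)=1-\#\mathrm{Sing}(S)$, neither equal to $1$ unless $S$ is smooth, and the exceptional trees sit inside fibers of any fibration and contribute to the count of degenerate fibers, so the estimates do not transfer ``verbatim.''

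The paper's proofs are organized to avoid exactly these points. Parts (1) and (2) use no Euler characteristics: each cusp of $C$ produces a $(-1)$-curve of branching number $3$ in the boundary $D$, and these are played off against the structure of degenerate fibers of a $\p^1$-fibration (for $\sk=-\infty$ via Miyanishi--Sugie--Russell, for $\sk=1$ via Kawamata's $\C^*$-fibration) and against Fujita's bark theory and the classification of boundaries with $Bk(D)=N$ (for $\sk=0$). For part (3) the paper works genuinely on $\s-D-E$, invokes Palka's classification of singular $\Q$-homology planes of log Kodaira dimension $0$ (a twisted $\C^*$-fibration, or an exceptional plane which is excluded directly), and then rules out twisted $\C^*$-fibrations by a case analysis of the $[2,1,2]$ fiber at infinity against the explicit shape of $D$ for uni- and bi-cuspidal $C$; the one implication in the direction of your ``invisibility'' claim that is actually true and needed, namely $\sk(\s-D)=-\infty\Rightarrow\sk(\s-D-E)=-\infty$, is the content of Lemma \ref{inf} and requires a nontrivial proof. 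To repair your proposal you would have to replace the reduction step by arguments of this kind; as written, for part (3) it proves at best the statement $\sk(S-C)\neq 0$, which is false.
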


The above inequalities are sharp. See Examples in Section \ref{example}. Example \ref{ExB} also shows that we can have $\sk(S-C) = 0 $. Thus, the assumption that $C$ is contained in the smooth locus of $S$ is necessary to have a right generalization of the results on plane cuspidal curves. \\

The study of rational cuspidal curves on $\Q$-homology projective planes are  motivated by a problem posed by Koll\'ar asking the classification of pairs $(S, C)$ where  $C$ is a rational cuspidal curve on a $\Q$-homology projective plane $S$(\cite[Problem 33]{kollar08}).\\

Note that $S$ in Theorem \ref{main} is necessarily rational. Indeed, 
 since $S$ is of Picard number one, the complement $X:=S-C$ is $\Q$-acyclic, i.e., a $\Q$-homology plane. To see this note that topologically $C$ is isomorphic to $\p^1$, now using the long-exact sequence of homology for pairs we can see that $X$ is a $\Q$-homology plane. $\Q$-homology planes have been studied extensively. In particular,  R.V. Gurjar, C.R. Pradeep and A.R. Shastri have proved that every $\Q$-homology plane with quotient singularities is rational (\cite{pradeep97} and \cite{gurjar97}). We also note that, by a result of T. Fujita, \Q-homology planes are affine (cf. \cite[Theorem 1.1]{palka13}).  \\

\section{\bf Notation and Preliminaries}

Let $\s$ be a smooth projective surface over the field $\mathbb{C}$ of complex numbers. A divisor on $\s$ is a $\mathbb{Z}$-linear combination of irreducible curves on $\s$. Similarly, a $\Q$-divisor on $\s$ is defined to be a $\Q$-linear combination of irreducible curves on $\s$.  A $\Q$-divisor $D$ is said to be \emph{nef}  if $D.C \geq 0$ for every irreducible curve $C$ 
on $\s$. A $\Q$-divisor $D$ is said to be \emph{pseudo-effective} if $D.H \geq 0$ for any nef $\Q$-divisor $H$. 
Clearly, every effective divisor is pseudo-effective.  For any (possibly reducible) curve $\Delta$ on $\s$, by a \emph{component} of $\Delta$, we mean an irreducible component of $\Delta$.\\

We recall some terminology from \cite[Section 6]{fujita82}. For an irreducible 
component $Y$ of $D$, we denote $Y.(D-Y)$ by $\beta_D(Y)$. This is called the \emph{branching number} 
of $Y$ in $D$. The component $Y$ is called a \emph{tip} of $D$ if $\beta_D(Y)=1$. A sequence $C_1,...,C_r$ of irreducible 
components of $D$ is called a \emph{rational twig} $T$ of $D$ if each $C_i$ is a non-singular rational  curve, 
$\beta_D(C_1)=1$, $\beta_D(C_j)=2$ 
and $C_{j-1} \cdot C_j=1$ for $2 \leq j \leq r$. $C_1$ is called the \emph{tip} of $T$. Since $\beta_D(C_r)=2$, there is a component $Y$ 
of $D$, not in $T$, such that $C_{r} \cdot Y=1$. If $Y$ is a rational tip of $D$, then $T'= T + Y$ is a connected component of 
$D$ and will be called a \emph{rational club} of $D$. A component $Y$ of $D$ such that $\beta_D(Y)=0$ is also called a \emph{rational club} of $D$. 
When the above $Y$ is rational and $\beta_D(Y)=2$, $T'$ is again a rational twig of $D$. If $\beta_D(Y) \geq 3$ or $Y$ is 
non-rational then $T$ is called a \emph{maximal rational twig} of $D$ and $Y$ is called the \emph{branching component} of $T$. For an effective $\Q$-divisor $D$, we denote by $\Q(D)$   the $\Q$-vector space generated   by the irreducible components of $D$.\\

If $T$ is a rational twig of $D$ which is a contractible rational twig of $D$, the element $N \in \Q(T)$ such that 
$N \cdot C_1=-1$ and $N \cdot C_j=0$ for $j \geq 2$ is called the \emph{bark} of $T$. If $T'=C_1+...+C_r+Y$ is a contractible 
rational club of $D$, the bark of $T'$ is define to be the $\Q$-divisor $N'$ in $\Q(T')$ such that 
$N' \cdot C_1=N' \cdot Y=-1$ and $N' \cdot C_j=0$ for $2 \leq j \leq r$. For an isolated rational 
normal curve $Y$ with $Y^2<0$, its bark is defined 
to be $2(-Y^2)^{-1}Y$. If all the rational clubs and maximal twigs of $D$ are contractible, then the sum of their 
barks is denoted by Bk $D$ and is called the \emph{bark} of $D$. For the definition of bark of a non-linear tree of smooth rational curves, see \cite{fujita82}.\\

The next results originally due to Fujita \cite{fujita82} are useful in this work.

\begin{lemma}$($\cite[Corollary 6.14]{fujita82}$)$\label{contractible}
Let $\s$ be a smooth projective surface and $D$ be an effective reduced divisor on it.  If $D$ has a rational twig or a club which is not contractible, then $\kappa(\s, K_{\s}+D)= \ol \kappa(\s - D)= -\infty$.
\end{lemma}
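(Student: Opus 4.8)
The plan is to prove directly that $|m(K_{\s}+D)|=\emptyset$ for every $m\geq 1$, which is exactly $\kappa(\s,K_{\s}+D)=-\infty$; the equality $\kappa(\s,K_{\s}+D)=\ol\kappa(S)$ (with $S=\s\setminus D$) is standard --- it holds by definition when $D$ has simple normal crossings, and in general $\ol\kappa(S)\leq\kappa(\s,K_{\s}+D)$, e.g. by pulling $K_{\s}+D$ back to a log resolution and noting that the correction term is supported on exceptional curves --- so it suffices to treat $\kappa(\s,K_{\s}+D)$. Fix $m\geq 1$ and suppose, for contradiction, that there is an effective $G\in|m(K_{\s}+D)|$. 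Write $\Gamma=C_1+\cdots+C_s$ for the non-contractible twig or club in question. In each of the cases --- $\Gamma$ a rational twig $C_1+\cdots+C_r$, a club $C_1+\cdots+C_r+Y$, or an isolated rational club $Y$ --- the divisor $\Gamma$ is a chain of smooth rational curves meeting transversally, so $M=(C_i\cdot C_j)$ is symmetric and tridiagonal with every off-diagonal entry equal to $1$, and the failure of $\Gamma$ to be contractible means precisely that $M$ is not negative definite (this is the condition under which the bark of $\Gamma$ fails to exist as an effective $\Q$-divisor). By adjunction every component $C$ of $\Gamma$ satisfies $(K_{\s}+D)\cdot C=(K_{\s}+C)\cdot C+(D-C)\cdot C=-2+\beta_D(C)\leq 0$, with strict inequality at $C_1$ (a tip of $D$, so $\beta_D(C_1)=1$; and $(K_{\s}+D)\cdot Y=-2$ when $\Gamma=Y$). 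Hence $G\cdot C_i\leq 0$ for all $i$ while $G\cdot C_1<0$, so $C_1\subseteq\mathrm{Supp}(G)$.

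Next, decompose $G=\sum_{i=1}^s a_iC_i+G'$ with $a_i=\mathrm{ord}_{C_i}(G)\geq 0$ and $G'$ effective having no component among $C_1,\dots,C_s$, so $g_i:=G'\cdot C_i\geq 0$. Computing $G\cdot C_i$ in two ways gives $(Ma)_i+g_i=m\,(K_{\s}+D)\cdot C_i$ for all $i$, where $a=(a_1,\dots,a_s)$; hence $a\geq 0$, $Ma\leq 0$, and $(Ma)_1\leq m\,(K_{\s}+D)\cdot C_1<0$, so $Ma\neq 0$. The contradiction then comes from the following fact about a chain: \emph{a symmetric tridiagonal matrix $M$ with all off-diagonal entries equal to $1$ and which is not negative definite admits no vector $a\geq 0$ with $Ma\leq 0$ and $Ma\neq 0$}. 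Indeed, any vanishing coordinate of such an $a$ propagates --- $a_k=0$ forces $(Ma)_k=a_{k-1}+a_{k+1}\leq 0$, hence $a_{k-1}=a_{k+1}=0$ --- so it would force $a=0$ and $Ma=0$; therefore $a$ is strictly positive. Then $-M$ is an irreducible symmetric $Z$-matrix for which the strictly positive vector $a$ satisfies $(-M)a\geq 0$ and $(-M)a\neq 0$, so by the standard criterion for a $Z$-matrix to be a nonsingular $M$-matrix (a Perron--Frobenius argument applied to the left Perron eigenvector of $\sigma I+M$ for $\sigma\gg 0$), $-M$ is a nonsingular $M$-matrix, hence positive definite; that is, $M$ is negative definite, a contradiction. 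So no such $G$ exists, and $\kappa(\s,K_{\s}+D)=-\infty$.

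The only step that genuinely invokes the hypothesis is this last one --- converting the failure of $M$ to be negative definite into the unsolvability of the nonnegative system $Ma\leq 0$, $Ma\neq 0$ --- and I expect it to be the crux. For $s=1$ it degenerates into the transparent inequality $a_1Y^2\geq 0$ against $a_1Y^2<0$; for longer chains the $M$-matrix input above is exactly what also guarantees that the barks of contractible twigs and clubs are effective, so it should be available from the surrounding machinery. The remaining ingredients I expect to be routine: the adjunction computation, the bookkeeping that makes the three cases uniform (each $\Gamma$ is a transverse chain of smooth rational curves along which $(K_{\s}+D)\cdot C\leq 0$, strictly at a tip), and the comparison between $\kappa(\s,K_{\s}+D)$ and $\ol\kappa(S)$.
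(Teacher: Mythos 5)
Your proof is correct, but it takes a genuinely different route from the paper's. The paper treats this statement as a citation of Fujita's Corollary 6.14; in Fujita's treatment (and in the authors' own sketch) it is the contrapositive of Lemma 6.13: when $K_{\s}+D$ is pseudo-effective, every component of every rational twig and club of $D$ lies in the negative part $N$ of the Zariski--Fujita decomposition of $K_{\s}+D$, and the support of $N$ is negative definite, so a non-contractible twig or club forces $K_{\s}+D$ to be non-pseudo-effective --- slightly more than $\kappa=-\infty$. You instead work directly with a hypothetical member $G$ of $|m(K_{\s}+D)|$, restrict it to the chain $\Gamma$, and combine the adjunction computation $(K_{\s}+D)\cdot C=-2+\beta_D(C)\leq 0$ (strict at a tip) with the classical Perron--Frobenius/$M$-matrix criterion to force the intersection matrix of $\Gamma$ to be negative definite, contradicting non-contractibility. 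This is more elementary and self-contained (no Zariski decomposition needed), at the cost of proving only $\kappa=-\infty$ rather than non-pseudo-effectivity --- which is all the lemma asserts. The linear-algebra step, including the zero-propagation argument upgrading $a\geq 0$ to $a>0$ along the connected chain, is correctly executed and is exactly the Artin--Mumford-type contractibility criterion. One minor imprecision: for the comparison $\ol\kappa(S)\leq\kappa(\s,K_{\s}+D)$ when $D$ is not SNC, it does not suffice that the correction term on a log resolution is exceptional; one needs it to be non-positive, which holds because a log resolution only blows up points of multiplicity at least $2$ on $D$ (and in the paper's applications $D$ is already SNC, so this is harmless).
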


\begin{lemma}\label{Fujita1}$($\cite[Lemma 6.20]{fujita82}, \cite[Lemma 1]{gurjar95}$)$  
Let $\s$ be a smooth projective surface and $D$ be an effective reduced SNC divisor on it. 
Assume that $\ol \kappa(\s-D) \geq 0$ and $N \neq$ Bk $D$ where $N$ is the negative part of the Zariski decomposition of $K_{\s} + D$. Then,  there exists 
a component $L$ of $N$ which is a $(-1)$-curve, not contained in $D$, satisfying 
one of the following conditions:

\begin{enumerate}  
\item $D \cdot L=0$, i.e. $D \cap L= \emptyset$
\item $D \cdot L=1$ and $L$ meets a twig of $D$.
\item $D \cdot L=2$ and $L$ meets exactly two components of $D$, one of which is an irreducible component of a twig of 
$D$ and the other one is a tip of a rational club of $D$.
\end{enumerate} 

Furthermore, $\sk(S-D-L)=\ol \kappa(S-D)$.
\end{lemma}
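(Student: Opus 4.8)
The plan is to argue via the Zariski decomposition of $K_{\s}+D$, whose negative part is the divisor $N$ appearing in the statement. Since $\sk(\s-D)=\kappa(\s,K_{\s}+D)\ge 0$, the divisor $K_{\s}+D$ is pseudo-effective and admits a Zariski decomposition $K_{\s}+D=P+N$ with $P$ nef, $N\ge 0$ effective, the intersection form on $\mathrm{Supp}(N)$ negative definite, and $P\cdot C=0$ for every component $C$ of $N$. By Lemma \ref{contractible} all rational twigs and rational clubs of $D$ are contractible, so $Bk(D)$ is defined, and by Fujita's peeling $N\ge Bk(D)$. The relation $P\cdot C=0$ gives $(K_{\s}+D)\cdot C=N\cdot C$ for every component $C$ of $N$, i.e., by adjunction, $N\cdot C=2g(C)-2+\beta_D(C)$ when $C\subseteq D$ and $N\cdot C=2g(C)-2-C^{2}+D\cdot C$ when $C\not\subseteq D$; this is the computational input used below.

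The main point is to produce the curve $L$. Since $N\neq Bk(D)$ while $N\ge Bk(D)$, the effective $\Q$-divisor $N-Bk(D)$ is nonzero, so the pair $(\s,D)$ is not almost minimal. One now carries out Fujita's peeling analysis of the negative part: evaluating the identities above on the components of $N$, using that $\mathrm{Supp}(N)$ and each of its connected sub-configurations are negative definite (so the relevant self-intersections are strictly negative), and invoking contractibility of the twigs and clubs, one shows that the components of $N$ lying in $D$ are exactly the maximal rational twigs and rational clubs of $D$, carrying precisely their bark coefficients. Hence the nonzero excess $N-Bk(D)$ has a component $L$ with $L\not\subseteq D$; a further inspection, using that $D$ is SNC (so $L$ meets $D$ transversally) and that $K_{\s}+D+L$ is again pseudo-effective, shows that $L$ is a $(-1)$-curve with $D\cdot L\le 2$, falling into exactly one of the three configurations (1)--(3).

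The last assertion is then formal: since $L$ is already a component of $N$, the support of $N+L$ coincides with that of $N$ and hence is negative definite, $P$ remains nef, and $P\cdot C=0$ for every component $C$ of $N+L$, so by uniqueness of the Zariski decomposition $K_{\s}+D+L=P+(N+L)$ is the Zariski decomposition of $K_{\s}+D+L$; as the Iitaka dimension of a pseudo-effective divisor agrees with that of its positive part, $\sk(\s-D-L)=\kappa(\s,K_{\s}+D+L)=\kappa(\s,P)=\kappa(\s,K_{\s}+D)=\sk(\s-D)$. The genuine obstacle is the previous step, namely the combinatorial statement that the $D$-part of $N$ is precisely the contractible twigs and clubs with their bark coefficients, that the excess then contains a $(-1)$-curve off $D$, and that this curve meets $D$ in one of the three prescribed ways; this is exactly Fujita's peeling analysis, requiring a careful case-by-case treatment of branching components, twigs, clubs, and the position of the extra curve, while everything else reduces to formal properties of the Zariski decomposition and adjunction.
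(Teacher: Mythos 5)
The paper does not prove this lemma at all: it is quoted verbatim from the literature (Fujita, Lemma 6.20, and Gurjar--Parameswaran, Lemma 1), so there is no internal proof to compare against, and your write-up has to be judged as a standalone argument. As such, it is a correct roadmap of the standard proof but not a proof. The framing is right: $N$ is the negative part of the Zariski decomposition $K_{\s}+D=P+N$, pseudo-effectivity follows from $\ol\kappa(\s-D)\geq 0$, Fujita's Lemma 6.13 gives contractibility of twigs and clubs and $N\geq Bk(D)$, and your last paragraph (that adding $L$ to the boundary does not change the Zariski decomposition's positive part, hence preserves $\ol\kappa$) is indeed formal and correct once $L$ is known to be a component of $N$ meeting $D$ as in (1)--(3). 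But the entire content of the lemma is the step you defer to ``Fujita's peeling analysis'': producing $L$, showing it is a $(-1)$-curve not in $D$, and pinning down the three intersection patterns. Asserting that this ``requires a careful case-by-case treatment'' is naming the theorem, not proving it.

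Moreover, the one intermediate claim you do commit to --- that the components of $N$ lying in $D$ are exactly the maximal rational twigs and clubs with precisely their bark coefficients --- is false as stated, because it ignores the non-linear components of $D$. The paper itself warns that the bark of a non-linear tree is defined separately in Fujita's paper: for a connected component of $D$ that contracts to a non-cyclic quotient singularity (say a $D_4$-configuration of $(-2)$-curves), the central component has branching number $3$, lies in no twig or club, and yet lies in $\mathrm{Supp}(N)$; only with Fujita's extended (``thick'') bark for such abnormal components does the identity you want have a chance of holding, and even then it is a consequence of the peeling argument rather than an input to it. Since your derivation of ``$N-Bk(D)$ has a component off $D$'' rests entirely on this claim, the existence of $L$ --- the heart of the lemma --- is not established. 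In short: correct skeleton, correct formal endgame, but the load-bearing combinatorial step is missing and the shortcut offered in its place does not hold in the stated generality.
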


\begin{lemma}\label{Fujita2} $($\cite[Corollary 8.8]{fujita82}, \cite[Lemma 4]{gurjar12}$)$ 
Let $\s$ be a smooth projective surface and $D$ be an effective reduced SNC divisor on it. Assume that $\ol\kappa(\s-D)=0$. Assume also that every $(-1)$-curve in $D$ meets at least three other 
components of $D$. If Bk $D$ = $N$, then any connected component of $D$ is one
of the following.
\begin{enumerate}

 \item A minimal resolution of a quotient singular point.
 \item A tree of $\mathbb{P}^1$'s with exactly two branching components such that the
branching components are connected by a (possibly empty) linear chain of
$\mathbb{P}^1$'s and each branching component meets exactly two other (-2)-curves
which are tips of $D$.
 \item A simple loop of $\mathbb{P}^1$'s.
 \item A tree of $\mathbb{P}^1$'s with a unique branching component which meets three
linear trees defining cyclic quotient singular points at one of their end
points. Furthermore, the absolute values $d_1,d_2,d_3$ of the discriminants of the
three trees satisfy $\Sigma 1/d_i=1$. 
 \item A tree of five $\mathbb{P}^1$'s with a unique branching component  which
intersects the other four curves transversally in one point each, and such that
the four curves are all (-2)-curves.
 \item A smooth elliptic curve.
\end{enumerate}

\end{lemma}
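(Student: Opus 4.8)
The plan is to run Fujita's peeling theory through the logarithmic Zariski decomposition of $K_{\s}+D$, and to read off the shape of each connected component from the numerical triviality of the nef part. First I would write $K_{\s}+D=P+N$ as the sum of its nef part $P$ and negative part $N$; since $\sk(\s-D)=\kappa(\s,K_{\s}+D)=0$, the nef divisor $P$ satisfies $\kappa(\s,P)=0$, hence $P\equiv 0$ is numerically trivial. The standing hypothesis $Bk(D)=N$ identifies the negative part with the bark, so $N$ is supported precisely on the maximal rational twigs, the rational clubs, and the isolated negative rational curves of $D$. I would then isolate the \emph{core} of $D$, namely the components not lying in $\mathrm{Supp}(N)$: these are the branching components ($\beta_D\geq 3$), the non-rational components, and the linear chains (bridges) joining them.

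The engine of the argument is the identity obtained by pairing $P\equiv 0$ against a core component $Y$. Adjunction gives $(K_{\s}+D)\cdot Y=2p_a(Y)-2+\beta_D(Y)$, so
\[
2p_a(Y)-2+\beta_D(Y)=N\cdot Y .
\]
Next I would compute $N\cdot Y$ explicitly. Each maximal twig abutting $Y$ is a contractible chain defining a cyclic quotient singularity of discriminant $d_i$, and a short Cramer's-rule (continued-fraction) computation of its bark shows that it contributes exactly $1/d_i$ to $N\cdot Y$, while a bridge contributes $0$. Hence the master relation becomes $2p_a(Y)-2+\beta_D(Y)=\sum_i 1/d_i$, the sum running over the twigs attached to $Y$. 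Equivalently, contracting every abutting twig to its quotient singular point turns $Y$ into an orbifold curve of orbifold Euler characteristic zero; this is the local incarnation of $\sk(\s-D)=0$.

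With this relation in hand the classification is a finite case analysis on the core. A connected component with empty core is negative definite and contracts to a quotient singularity, giving (1); a core which is a cycle yields a loop, giving (3); a non-rational core forces $p_a(Y)=1$ and $\beta_D(Y)=0$, an isolated smooth elliptic curve, giving (6). For a rational core component carrying $b:=\beta_D(Y)$ twigs and no bridge, the relation reads $\sum_{i=1}^{b}1/d_i=b-2$; since each $d_i\geq 2$ this forces $b\leq 4$, with $b=3$ giving $\sum 1/d_i=1$ (case (4)) and $b=4$ giving all $d_i=2$, i.e. four $(-2)$-tips (case (5)). Finally, if the core contains a bridge, then each of its endpoints is a branching component of valence $3$ whose two twigs satisfy $1/d_1+1/d_2=1$, forcing $d_1=d_2=2$; a short combinatorial argument on the dual tree excludes a third branching component, leaving exactly two branching components joined by a chain and each carrying two $(-2)$-tips, which is case (2). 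Throughout, the hypothesis that every $(-1)$-curve of $D$ meets at least three other components keeps $(-1)$-curves out of the twigs and bridges, so that the chains genuinely define quotient singularities and the discriminant bookkeeping is valid.

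The hard part is twofold. First, one must secure the input from Fujita's machinery: that the negative part of the decomposition is the bark and that $P\equiv 0$, for which the contractibility supplied by Lemma \ref{contractible} and the peeling estimates are essential. Second, and more delicate, is solving the relations $P\cdot Y=0$ simultaneously over all core components together with the global combinatorics of the (tree or loop) dual graph: one must check that no configuration outside (1)--(6) can satisfy every local orbifold-Euler-characteristic constraint at once. The bark computation yielding the contribution $1/d_i$, and the exclusion of higher-valence or multiply-branched cores, are where the genuine work resides.
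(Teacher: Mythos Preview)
The paper does not supply its own proof of this lemma: it is quoted verbatim as a known result, with citations to \cite[Corollary 8.8]{fujita82} and \cite[Lemma 4]{gurjar12}, and is then used as a black box in Sections 3 and 4. So there is no in-paper argument to compare your proposal against.

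That said, your outline is precisely the route taken in the cited sources. Fujita's proof runs the logarithmic Zariski decomposition $K_{\s}+D=P+N$, uses $\sk(\s-D)=0$ to force $P\equiv 0$, invokes the hypothesis $Bk(D)=N$ to identify the negative part with the bark, and then reads off the shape of each connected component from the vanishing $P\cdot Y=0$ on core components via the adjunction identity $2p_a(Y)-2+\beta_D(Y)=N\cdot Y$ together with the bark-contribution formula $N\cdot Y=\sum_i 1/d_i$. Your case analysis (empty core $\Rightarrow$ (1); cycle $\Rightarrow$ (3); $p_a=1$ isolated $\Rightarrow$ (6); single rational branching component with $b=3,4$ $\Rightarrow$ (4),(5); bridge case $\Rightarrow$ (2)) matches Fujita's. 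The one step you pass over quickly, namely that $\kappa(\s,P)=0$ for a nef $\Q$-divisor $P$ forces $P\equiv 0$, is exactly where Fujita appeals to his earlier semipositivity results; in a self-contained write-up you would want to cite or reprove that. Otherwise your proposal is correct and is the same argument the paper is implicitly invoking.
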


From now on, let $C$ be a rational cuspidal curve contained in the smooth locus of a $\Q$-homology projective plane  $S$. Let $\s\rar S$ be a resolution of singularities of both $S$ and $C$ such that the total transform $D$ of $C$ is a simple normal crossing divisor. Let $C'$ be the proper transform of $C$. We always assume that the map is minimal, i.e., the Picard number of $\s$ is the least possible. The map $\s\rar S$  factors 
through the minimal resolution $S' \rightarrow S$ of singularities of $S$. 
Let $E$ be the reduced exceptional divisor for the resolution of singularities of $S$.
Since $S$ has only quotient singularities,  $E$ is a simple normal crossing divisor. The smooth locus of $S$ is denoted by $S^0$. We will repeatedly use the well-known properties of singular fibers of a $\p^1$-fibration on a smooth surface 
\cite[Chapter I, Lemma 4.4.1]{miya81}.

\begin{lemma}\label{1sing}
With the above notation, if $\ol \kappa(S'-C) = -\infty$, then $C$ has at most  one  singular point. 
\end{lemma}

\begin{proof}
Let $\widetilde{S} \rightarrow S'$ be a resolution of singularities on $C$ and let $D$ be the total transform of $C$. Since $D$ is connected, by a result of Miyanishi-Sugie and Russell(see \cite[Theorem I.3.13]{miya81}), there is an $\A^1$- fibration 
on $\widetilde{S}-D$. Let $h: \widetilde{S}' \rightarrow \widetilde{S}$ be a birational morphism 
such that the $\A^1$-fibration extends to a $\p^1$-fibration on $\widetilde{S}'$, and $D'$ be the proper transform of $D$ under $h$. \\

Assume that $C$ has at least two cusps.  Then,  $D$ has at least two $(-1)$-curves with branching number $3$. Indeed, those $(-1)$-curves arise from the final blow-ups in order to resolve the cusps of $C$ and make $D$ an SNC divisor. Note that $C$ cannot be a section of the $\p^1$-fibration on $\widetilde{S}'$. Indeed,  as it has a $(-1)$-curve meeting it transversely, this $(-1)$-curve will therefore be reduced and have branching number 2 in a fiber of a $\p^1$-fibration. This is impossible.
Hence at least one of the singular fibers of the $\p^1$-fibration on $\widetilde{S}'$ 
has a $(-1)$-curve with branching number $3$ in the fiber. This is not possible by well-known properties of singular fibers of a 
$\p^1$-fibration on a smooth projective surface. \\
\end{proof}

The following lemma is useful in proving  Theorem \ref{main} (3).

\begin{lemma}\label{inf}
If $\sk(\s-D)=-\infty$, then $\sk(\s-D-E)=-\infty$.
\end{lemma}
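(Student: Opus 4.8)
The plan is to argue by contradiction: suppose $\ol\kappa(\s - D) = -\infty$ but $\ol\kappa(\s - D - E) \geq 0$. Since $D + E$ is an effective reduced SNC divisor on the smooth projective surface $\s$, the first step is to pass to a minimal model for the pair. Concretely, I would run the log-minimal-model-type reduction coming from Lemma \ref{Fujita1}: as long as $N := \mathrm{Bk}(D+E)$ is not equal to the relevant bark and the log Kodaira dimension is $\geq 0$, there is a $(-1)$-curve $L$ not in $D+E$ meeting it in one of the three prescribed ways, and contracting $L$ preserves $\ol\kappa(\s-D-E) = \ol\kappa$ of the contracted surface minus the image of $D+E$. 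After finitely many such contractions we reach a situation where no such $L$ exists, i.e. $D+E$ is ``almost minimal'' in Fujita's sense. The key point is to track what happens to $D$ under this process. The curve $E$ arises as the minimal resolution of the quotient singularities of $S$, so each connected component of $E$ is already the minimal resolution of a quotient singularity; in particular $E$ is negative-definite and contains no $(-1)$-curves meeting the rest of $E$ in a way that would allow contraction away from $D$. Hence the contractions in the reduction either do not touch $E$, or, if a $(-1)$-curve $L$ meets $E$, it must also meet $D$ (since $C$ is contained in the smooth locus of $S$, $C'$ is disjoint from $E$, but the exceptional curves of $\s \to S'$ produced in resolving the cusps can abut $E$ only through $D$-components), and I need to rule this out or absorb it. This bookkeeping is where the main obstacle lies.

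The second step: once we are at an almost-minimal pair $(\bar\s, \bar D + \bar E)$ with $\ol\kappa(\bar\s - \bar D - \bar E) \geq 0$, I want to derive a contradiction with $\ol\kappa(\s - D) = -\infty$, which by birational invariance means $\ol\kappa(\bar\s - \bar D) = -\infty$. Here I would use the logarithmic ramification / addition-type inequality for the ``extra'' boundary $\bar E$: adding components to the boundary can only increase log Kodaira dimension when those components are, say, sufficiently negative, but it can never decrease it — more precisely, $\ol\kappa(\bar\s - \bar D) \geq 0$ whenever $\ol\kappa(\bar\s - \bar D - \bar E) \geq 0$ provided $\bar E$ is contractible (its bark is well-defined and $K + \bar D + \bar E - \mathrm{Bk}$ differs from $K + \bar D$ by an effective $\Q$-divisor plus the bark contribution supported on $\bar E$). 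Since $E$ is the resolution of quotient singularities it is automatically contractible with well-defined bark, so $K_{\s} + D + E - \mathrm{Bk}(E) \equiv K_{\s} + D + (\text{effective supported on } E)$, and pulling sections back gives $\ol\kappa(\s - D) \geq \ol\kappa(\s - D - E) \geq 0$, contradicting the hypothesis. So really the lemma should follow from the monotonicity $\ol\kappa(\s - D) \geq \ol\kappa(\s - D - E)$, which holds because enlarging the boundary by a contractible (negative-definite) configuration does not decrease log Kodaira dimension.

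In fact, I suspect the cleanest route avoids the full minimal-model reduction altogether: directly, for any effective reduced SNC $D$ and any connected negative-definite SNC divisor $E$ disjoint-in-the-relevant-sense that is contractible to quotient singularities, one has $P_m(\s, K_\s + D + E) \leq P_m(\s, K_\s + D)$ is the wrong direction — rather, using that $E$ supports an effective $\Q$-divisor $F$ with $K_\s + D + E - F$ nef-on-$E$ ... hmm, the sign needs care. The safe statement is: $\lfloor m(K_\s + D + E) \rfloor$ and $\lfloor m(K_\s+D)\rfloor$ have the same sections once we subtract the (effective, $E$-supported) bark, so $h^0(m(K_\s+D+E)) \leq h^0(m(K_\s+D) + mE)$ and conversely restriction to $\s \setminus E$ identifies sections, giving $\ol\kappa(\s-D-E) \le \ol\kappa(\s-D)$. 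The main obstacle, then, is making the comparison of pluricanonical sections on $\s \setminus (D+E)$ versus $\s \setminus D$ precise — i.e. checking that every section contributing to the left-hand Kodaira dimension extends across $E$ with at worst the bark-type poles, which uses exactly that $E$ is a disjoint union of minimal resolutions of quotient singularities (log terminal), so its discrepancies are $> -1$ and no obstruction to extension arises. Granting the hypothesis $\ol\kappa(\s-D) = -\infty$, the contrapositive of this inequality forces $\ol\kappa(\s - D - E) = -\infty$, which is the claim.
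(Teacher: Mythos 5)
Your argument ultimately rests on the inequality $\ol\kappa(\s-D-E) \le \ol\kappa(\s-D)$, and this is where the proof breaks down: the monotonicity of log Kodaira dimension under enlarging the boundary goes the \emph{other} way. Since $mE$ is effective, every section of $m(K_{\s}+D)$ gives a section of $m(K_{\s}+D+E)$, so the general fact is $\ol\kappa(\s-D-E) \ge \ol\kappa(\s-D)$ (the log Kodaira dimension of an open subset dominates that of the ambient open surface). Adding a negative-definite configuration contracting to quotient singularities can strictly \emph{increase} $\ol\kappa$: Example \ref{ExB} of this paper has $\sk(\s-D)=0$ and $\sk(\s-D-E)=1$, with $C$ disjoint from $E$, so it is a counterexample to your inequality in exactly the setting of the lemma. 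Your bark-based justification runs into the same sign problem you yourself flag: for quotient singularities the coefficients of $\mathrm{Bk}(E)$ lie in $[0,1)$, so $E-\mathrm{Bk}(E)$ is effective and $K_{\s}+D+E-\mathrm{Bk}(E)\ge K_{\s}+D$, which again only yields the useless direction. The whole content of Lemma \ref{inf} is that in this particular situation the jump from $-\infty$ to $\ge 0$ cannot happen; a general monotonicity that would make this trivial does not exist.

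The first half of your proposal (almost-minimalization of $D+E$ via Lemma \ref{Fujita1} and the bookkeeping of contractions near $E$) is explicitly left unfinished and is not what closes the argument. The paper instead proceeds by contradiction using classification results: since $\sk(\s-D)=-\infty$, the curve $C$ is unicuspidal by Lemma \ref{1sing}, so $D$ has Orevkov's explicit shape (a unique $(-1)$-curve $H$ of branching number three, with $C'$ and a linear chain $T_2$ among its branches). If $\sk(\s-D-E)\ge 0$, then by Koras--Palka it is $\le 1$, and Palka's structure theorem for singular $\Q$-homology planes of non-general type produces an untwisted $\C^*$-fibration on $S-C$ with prescribed singular fibers and horizontal components of branching number three. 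This fibration is shown to extend to a $\p^1$-fibration on $\s$ whose unique fiber inside $D$ must be $C'\cup H$, and then the linearity of $T_2$ forces a horizontal component of branching number two, contradicting Palka's description. So the lemma genuinely requires this machinery; no soft section-counting or boundary-monotonicity argument of the kind you propose can work.
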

\begin{proof}
Before we begin the proof, we describe the structure of $D$ when $C$ is uni-cuspidal. There is a unique $(-1)$-curve in 
$D$, which we denote by $H$, such that its branching number is three. The proper transform $C'$ of $C$ is one of the 
branches of $H$. Let $T_2$ and $T_3$ denote the other two branches of $H$. It is easy to see that  one of these two 
(say $T_2$) is a linear chain of rational curves and the other  branch, say $T_3$, is a tree of rational curves such that every 
branch of $T_3$ has branching number three(\cite[Proposition 3.2]{orevkov02}).  We will use this description implicitly in the arguments that follow.\\

By a result of Koras and Palka(\cite{koras13}), $\sk(\s-D-E) < 2$. Suppose that $\sk(\s-D-E) \geq 0$. 
By \cite[Theorem 5.1]{palka12}, $S-C$ has an untwisted  $\C^*$-fibration $f$ over $\A^1$ such that the 
fiber at infinity can be assumed to be smooth and $f$ has two singular fibers. One of the singular fibers (say $F_1$) is a non-reduced punctured affine line $\C^*$   
and the other (say $F_2$) consists of two $\C$'s meeting in a cyclic singular point. Furthermore, both $\C$'s have multiplicity at least 
two. The $\C^*$-fibration  is naturally extended to $\s-D$ and the extended action is also denote by $f$. We will derive a contradiction by showing  that such a $\C^*$-fibration cannot exist on $\s-D$.\\

We claim that $f$ is again extended to  a $\p^1$-fibration on $\s$. If not, then $f$ has a base point on $D$. Let $D'$ be the total transform of $D$ under 
the resolution of base locus. It is clear that the $(-1)$-curve resulting from the last blow up will be one of the horizontal  components of $D'$ to $f$ which has branching number at most two. This is a contradiction to the description of the $\C^*$-fibration by Palka (\cite[Figure 3, pp.450]{palka12}) where each  horizontal component has branching number three. Indeed, if one of the horizontal components do not have branching number 3, then $F_1$ would be reduced, contrary to the assertion ($\mu,\tilde{\mu} \geq 2$) in \cite[Theorem 5.1]{palka12}. Thus $f$ has no base points on $D$.\\

The extended $\p^1$-fibration on $\s$ is also denoted by $f$.   A fiber of $f$ is contained in 
$D$. We denote this fiber by $R$. The fiber $R$ is reducible because $D$ does not contain an irreducible component 
with self intersection 0 and branching number two. Clearly $R$ contains a $(-1)$-curve. Furthermore, the only components 
of $D$ that can possibly have self intersection $-1$ are $C'$ and $H$.\\

We claim that $R= C' \cup H$. First note that $H$ is contained in $R$. Indeed, if $H$ is not contained in $R$, it 
forces $(C')^2=-1$ and $C' \subset R$. But $C'$ is a tip of $D$ and $R$ is a rivet of $f$. Thus, the unique component of 
$D$ which is adjacent to $C'$ (that is $H$) has to be in $R$, a contradiction. Thus, $H \subset R$. Now at least one of the irreducible components of $D$ adjacent to $H$ must be horizontal to the fibration, since otherwise we will have a $(-1)$-curve with branching number three in a singular fiber of a 
$\p^1$ fibration. Next, if one of the irreducible components of $D$ adjacent to $H$ is a cross-section then $H$ is reduced 
in $R$. Therefore $H$ cannot have branching number two in $R$. This shows that $H$ meets both sections of $f$ 
in $D$. This forces $C'^2=-1$ and $R=H \cup C'$. This proves our claim.\\

Using the claim and the fact that $T_2$ is a linear chain of rational curves, we see that the branching number of one 
of the horizontal components of $f$ is two. By \cite[Theorem 5.1]{palka12}, $\sk(\s-D) \geq 0$, a contradiction. \end{proof}

\begin{corollary}\label{(-1)}
If $C'$ is a $(-1)$-curve, then  $\sk(\s-D-E)=-\infty$.
\end{corollary}

\begin{proof}
If we blow down $C'$, the divisor $D$ maps to a divisor  with a non-contractible twig, which would imply, by Lemma \ref{contractible}, that $\sk(\s-D)=-\infty$. Now the result follows by Lemma \ref{inf}.
\end{proof}

Finally, we record a well-known observation about $\C^*$-bundles:
\begin{proposition}\label{bundle}
Let $f: X \rightarrow \C^*$ be a morphism with every scheme-theoretic fiber $\C^*$. Then $\sk(X)=0$.
\end{proposition}

\begin{proof}
If $f$ is Zariski locally trivial (with respect to the base) then it is a principal bundle associated to a line bundle 
on $\C^*$. Since every algebraic vector bundle on $\C^*$ is trivial, the total space of this principal bundle 
is a product $\C^* \times \C^*$. Therefore $\sk(X)=0$.\\

If $f$ is not Zariski locally trivial, then the pullback of $f$ to a twofold finite etale cover of the base $\C^*$ is 
Zariski locally trivial. Hence a twofold finite etale cover of $X$ has $\sk=0$. But $\sk$ is preserved under finite etale covers by
a result of Iitaka. This proves the result.

\end{proof}

\section{Proof of Theorem \ref{main} (1),(2)}

Theorem \ref{main} (1) directly follows from Lemma \ref{1sing} by taking $S'$ as a minimal resolution of $S$. Now we prove Theorem \ref{main} (2). 
 
\subsection{The case $\ol \kappa(\widetilde{S}-D) =0$}   

If $C$ has at least three cusps, then $C'$ has more than 2 branches in $D$. Furthermore, each branching component is attached to $C'$ by means of a $(-1)$-curve each having branching number 3 in $D$. 
By Lemma \ref{Fujita2}, Bk $D$ $\neq N$. Then,  there is a $(-1)$-curve not contained in $D$ satisfying one of the 
three conditions in Lemma \ref{Fujita1}. By blowing down this curve and any other $(-1)$-curves until Bk $D = N$, $D$ maps to one of 
the divisors listed in Lemma \ref{Fujita2}. Here we denote the image of $D$ under the blowdown map again by $D$.   In our case, only the cases (2) and (4) in Lemma \ref{Fujita2} is possible (as the intersection matrix of $D$ has a non-negative eigenvalue, the image of $D$ under the blow-downs of $(-1)$-curves also has a non-negative eigenvalue hence (1) of \ref{Fujita2} is not possible).  The image of $C'$ under the blowing downs have branching number at least $3$ as the branches of $C'$ are attached to it by $(-1)$-curves. Hence, by Lemma $\ref{Fujita2}$ (2) and (4), at least two of the branches of the image of $C'$ will have to be contractible linear chains of rational curves whose tips meet the image of $C'$. This is not possible as the branches of $C'$ are attached to $C'$ by a $(-1)$-curve which is not a tip of a rational twig.\\

\iffalse
Assume that $C$ has more than two cusps. Then $D$ has more than two $(-1)$-curves with branching number 3. 
By Lemma \ref{Fujita2}, Bk $D$ $ \neq N$. Then, by Lemma \ref{Fujita1}, there is a $(-1)$-curve not contained in $D$ satisfying one of the 
three conditions. By blowing down this curve and any other $(-1)$-curves that arise as a result, $D$ maps to one of 
the divisors listed in Lemma \ref{Fujita2}. But all the possibilities listed in Lemma \ref{Fujita2} have at most two branching component points 
with branching number 3. From this we deduce that at least one of the $(-1)$-curves in $D$ will be mapped to a curve 
with non-negative self-intersection, We denote this curve by $J$. Thus $J$ cannot be in a twig. Thus $J$ cannot have branching number 1, hence it has branching number 2. The only possibility for a curve of non-negative self-intersection to have branching number 2, by \ref{Fujita2}, is if $J$ is adjacent to a $(-1)$-curve $J'$ with branching number 3. But the pre-image of $J$ in $D$ was not adjacent to such a curve, hence the inverse image of $J'$ in $D$ was not a $(-1)$-curve. Thus are at least two $(-1)$ cruves in $D$ which are mapped to curves with non-negative self intersection. Both are not in a twig. \\
\fi
 
\subsection{The case $\ol \kappa(\widetilde{S}-D) =1$} 
In this case, because $X$ is affine, by \cite[Chapter 3, Theorem 1.7.1]{miyanishi01} originally due to Y. Kawamata, there is a $\C^*$-fibration on $\s-D$. \\

Assume that $C$ has at least three cusps. Then,  $D$ has at least three $(-1)$-curves each of which is a branching component with branching number three. Furthermore, none of them are adjacent to each other.\\

The $\C^*$-fibration extends to a $\p^1$-fibration $f$ on $\s$. Indeed, if otherwise, there are at most two base points, hence, there is at least one $(-1)$-curve in $D$  on which there is no base point. This $(-1)$-curve will be in a fiber of $f$ since it is neither a section nor a $2$-section. Furthermore, $C'$ cannot be a section of the $\p^1$-fibration $f$ on $\widetilde{S}$. Indeed, if $C'$ were a section, then one of the $(-1)$-curves adjacent to $C'$  which is in a fiber will be reduced and have branching number 2 in the fiber, a contradiction.  Thus, after resolution of the base locus, this $(-1)$-curve is in a fiber of the $\p^1$-fibration and has branching number $3$ in the fiber. This is  not possible.\\

Now that the $\C^*$-fibration extends to a $\p^1$-fibration $f$ on $\s$, we claim that $C'$ is not a 2-section of $f$. Indeed, if otherwise, we would have three $(-1)$-curves in three singular fibers meeting $C'$ once with multiplicity 2. Thus there would be a 2-1 map from $C'$ to a rational curve with 3 points of ramification. This is impossible.\\

Furthermore, none of the $(-1)$-curves in $D$ can be contained in a fiber: If a $(-1)$-curve $l$ from $D$ is contained in a fiber $F$ of the $\p^1$-fibration $f$ then $l$ cannot have branching number three in $F$. Thus at least one of the curves in $D$ adjacent to $l$ is horizontal to $f$. But this makes $l$ a reduced $(-1)$-curve in $F$. Hence $l$ cannot have branching number two in $F$. Thus, two curves adjacent to $l$ are horizontal to $f$. None of them are $(-1)$-curves, hence the other $(-1)$-curves in $D$ are in the fibers of $f$ as there are only two components of $D$ which are horizontal to $f$.   Let $l'$ be the other $(-1)$-curve in $D$ which is contained in a fiber $F'$ of $f$. By a similar reasoning, two of the branching components of $l'$ are horizontal to $f$. Thus, $D$ contains a loop. This is not possible as $D$ is a tree of rational curves. Hence none of the $(-1)$-curves are in fibers of $f$ thus all the $(-1)$-curves in $D$ are horizontal to $f$. But there are exactly two components of $D$ which are horizontal to $f$. Hence $D$ cannot have more than two $(-1)$-curves which is a contradiction.

\section{\bf Proof of Theorem 1.3 (3)}

We prove Theorem \ref{main} (3).  Assume that $\sk(\s-D-E) = 0$. By \cite[Theorem 6.1]{palka12}, if $\sk(\s-D-E)=0$, then there is a twisted $\C^*$-fibration on $\s-D-E$ or $S-C$ is an exceptional $\Q$-homology plane as described in \cite{palka11}. Moreover, the boundary of an exceptional $\Q$-homology plane is a fork with branching $(-1)$-curve and three maximal twigs which are either $[2]$ and two $[2,2,2]$'s,  or three $[2,2]$'s (\cite[Proposition 4.4]{palka11}).  These do not occur in our case since $D$ is the SNC resolution of a cuspidal rational curve and $C'$ is not a $(-1)$-curve by Corollary \ref{(-1)}.  Now the following lemma gives a contradiction.  

\begin{lemma}\label{untwisted}
If $ \sk(\s-D-E) =0$ or $1$,   then every $\C^*$-fibration on $\s-D-E$ is untwisted.
\end{lemma}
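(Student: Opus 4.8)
The plan is to suppose for contradiction that $V:=\s-D-E$ (which is $S^0-C$) carries a twisted $\C^*$-fibration $g\colon V\to B$, extend it to a $\p^1$-fibration, and reach a contradiction from a count of Euler characteristics. First, the base: $V$ is rational, so $B$ is a smooth rational curve; moreover $H^1(V;\Q)=0$ — because $S-C$ is $\Q$-acyclic and the link of each quotient singularity of $S$ is a $\Q$-homology $3$-sphere, a Mayer--Vietoris argument gives $H_1(V;\Q)=0$ (equivalently, the components of $D+E$ span the rational N\'eron--Severi group of $\s$, since $S$ has Picard number one) — so $V$ carries no non-constant invertible regular function, hence neither does $B$, forcing $B\in\{\p^1,\A^1\}$. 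Write $\bar B=\p^1$ for the smooth completion of $B$.

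Next, I would extend $g$ to a $\p^1$-fibration $\bar g\colon\bar V\to\bar B$ by resolving, on a smooth completion of $V$, the base points of the induced rational map (all lying on the boundary), and set $\bar\Delta:=\bar V\setminus V$. Since $g$ is twisted, the horizontal part of $\bar\Delta$ is a single irreducible curve $\Sigma$, a $2$-section of $\bar g$. Every component of $\bar\Delta$ — the proper transform of a component of $D$ or $E$, or a curve contracted over $\s$ — is a smooth rational curve, so $\Sigma\cong\p^1$ and $\bar g|_\Sigma\colon\Sigma\to\bar B$ is a degree-$2$ morphism of projective lines; by Riemann--Hurwitz it ramifies over exactly two points of $\bar B$. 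The $\bar g$-fiber over a point of $\bar B\setminus B$ lies entirely in $\bar\Delta$ (as $g$ is surjective onto $B$); since $\#(\bar B\setminus B)\le 1$, at least one ramification value $p_0$ lies in $B$, and $\Sigma$ meets $\bar g^{-1}(p_0)$ in a single point.

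Now the count. For $c\in B$ the fiber $g^{-1}(c)=\bar g^{-1}(c)\cap V$ is obtained from the tree of $\p^1$'s $\bar g^{-1}(c)$ by deleting its vertical boundary components — leaving a non-empty forest of $\p^1$'s of Euler characteristic $\ge 2$ — and then deleting the at most two points where $\Sigma$ meets it, so $\chi(g^{-1}(c))\ge 0$ for all $c$, and $\chi(g^{-1}(p_0))\ge 1$ because $\Sigma$ meets $\bar g^{-1}(p_0)$ in only one point. Since the general fiber of $g$ is $\C^*$, $\chi(V)=\sum_{c\in B}\chi(g^{-1}(c))\ge 1$. On the other hand $\chi(V)=\chi(S^0)-\chi(C)=(3-k)-2=1-k$, where $k$ is the number of singular points of $S$ (using $\chi(S)=3$ for a $\Q$-homology projective plane, $\chi(S^0)=\chi(S)-k$ since the link of each quotient singularity has Euler characteristic $0$, and $\chi(C)=2$ since the cusps of $C$ are unibranch so that $C$ is homeomorphic to $\p^1$). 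Hence $1-k\ge 1$, so $k=0$ and $S=\p^2$.

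Finally, the case $S=\p^2$, $V=\p^2-C$. If $\sk(V)=0$ this does not occur at all, by Orevkov's theorem, so the statement is vacuous; assume $\sk(V)=1$. Equality in the count now forces a rigid picture: $B=\A^1$, exactly one ramification value of $\Sigma$ lies in $\A^1$ (the other is the point at infinity), $g$ has a unique non-$\C^*$ fiber — over $p_0$, isomorphic to $\C$ — and $\Sigma$ is a component of $D$. I would then play this against the structure of $D$, the resolution of a rational cuspidal curve: a tree with center $C'$ in which every cusp contributes a $(-1)$-curve of branching number three carrying two branches. If $\Sigma=C'$, the ramification of $C'\to\p^1$ together with the way the cusp branches are attached to $C'$ forces either a second $\C$-fiber, against the rigidity above, or a non-contractible twig in $\bar\Delta$, whence $\sk(V)=-\infty$ by Fujita's criterion (Lemma \ref{contractible}); if $\Sigma$ is any other component of $D$, its branching number in $D$ is incompatible with being a $2$-section of $\bar g$. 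This last step — reconciling the combinatorics of the cuspidal boundary with the structure of a twisted $\C^*$-fibration over $\A^1$ — is where I expect the real difficulty to lie, whereas the Euler-characteristic count is routine and already disposes of the essentially new case where $S$ is singular.
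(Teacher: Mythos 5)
Your reduction to $S=\p^2$ rests on the termwise inequality $\chi(g^{-1}(c))\ge 0$, and the justification you give for it is wrong. When you delete the vertical boundary components from the tree $\bar g^{-1}(c)$, what remains is not a closed ``forest of $\p^1$'s of Euler characteristic $\ge 2$'': each surviving component is also punctured at every point where it met a deleted boundary component, and each such puncture drops $\chi$ by one. Already a two-component fiber $F_1\cup F_2$ with $F_1\subset\bar\Delta$ leaves $F_2$ minus one point, i.e.\ a copy of $\C$, and if the $2$-section $\Sigma$ meets $F_2$ in two points the open fiber is $\C$ minus two points, of Euler characteristic $-1$. Nor can you fall back on Suzuki's inequality, because $V=S^0-C$ is affine only when $S$ is smooth --- exactly the case your count is supposed to eliminate; for a $\C^*$-fibration on an affine surface with finitely many points removed, fibers of negative Euler characteristic do occur. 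To make an Euler-characteristic argument run you would first have to show that $E$ is vertical so that the fibration descends to the affine surface $S-C$ (this is the opening step of the paper's proof, via Lemma \ref{inf}), and even then the fibers of $g$ on $V$ are the fibers on $S-C$ with the singular points of $S$ deleted, so the bound you need is still not automatic. As it stands, the conclusion $k=0$ is not established.

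Even granting the reduction, the residual case $S=\p^2$ with $\sk(V)=1$ --- which you explicitly leave as a sketch, noting that ``the real difficulty'' lies there --- is where essentially all of the content of the lemma sits. The paper's proof is exactly a detailed treatment of this kind of situation: it first shows $E$ is contained in fibers and that $C$ has at most two cusps, then uses the structure theory of twisted $\C^*$-fibrations (base $\A^1$, irreducible singular fibers with exactly one affine line, fiber at infinity of type $[2,1,2]$, the $2$-section forced to be a $(-3)$-curve) and plays this against the combinatorics of $D$ separately in the uni-cuspidal and bi-cuspidal cases to reach a contradiction via Lemma \ref{Fujita2}. Your proposal neither establishes the reduction nor carries out the case that remains after it.
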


\begin{proof}
The $\C^*$-fibration $f$ on $\s-D-E$ can be extended to a $\C^*$-fibration on $\s-D$. That is to say, 
$E$ is contained in fibers of $f$. Indeed, if otherwise, an irreducible component 
of $E$ is horizontal for $f$. Thus, $\widetilde{S} - D$ contains a cylinder-like open set. Hence,  by (\cite{miya81}, Chapter 1, Theorem 3.13) that 
$\sk(\s-D)=-\infty$.  By Lemma \ref{inf}, this implies $\sk(\s-D-E)=-\infty$, a contradiction. \\

Since  $ \sk(\s-D-E) < 2$, we have $\sk(\s-D) <2$, thus $C$ has at most two cusps by Theorem 1.3 (1) and (2).\\

Case 1. C is uni-cuspidal.\\
We use the notation in Lemma \ref{inf} in which  a description of $D$, when $C$ is uni-cuspidal, is given.\\
% Recall that $C'$ is not a $(-1)$-curve by Corollary \ref{(-1)}. \\

Assume that $f$ is a twisted $\C^*$-fibration on $\s-D-E$. By  \cite[Lemma 2.10]{miya91}, the base of the $\C^*$-fibration $f$ 
is $\A^1$. Furthermore, all singular fibers of $f$ are irreducible and exactly one of them is an affine line (under the reduced  structure). By \cite[Lemma 4.1]{palka12}, the fiber at infinity is of the type $[2,1,2]$.\\

We claim that $f$ has no base points on $D$. If $f$ has a base point on $D$, it has 
to be unique. Indeed, if $f$ has two base points on $D$, then $f$ is an untwisted fibration. Furthermore, the base point of 
$f$ will have to lie on $H$ as $H$ is the unique $(-1)$-curve in $D$ with branching number three.  Let $D'$ be the total 
transform of $D$ under the resolution of base locus. As the base point is on $H$, $D'$ has a unique $(-1)$-curve which 
is horizontal to $f$. But the fiber at infinity must also contain a $(-1)$-curve, a contradiction.  \\

Since $f$ has no base points on $D$,  $H$ is the unique $(-1)$-curve in $D$ and is contained in the fiber at 
infinity which is of the type $[2,1,2]$.  Furthermore, $C'$ is a $(-2)$-curve contained in the fiber at infinity and the linear chain $T_2$ is an irreducible $(-2)$-curve. Let $\theta$ denote the 2-section of $f$ which is an irreducible component of $D$ adjacent to $H$. As $D$ can be blown down to $C$, it is easy to deduce that $\theta$ is a $(-3)$-curve.\\

Recall that $\theta$ has branching number at most three. First, we show that $\theta$ has at most two branches.  Assume that   $\theta$ has branching number three. By $M_1$ and $M_2$ we 
denote the two branches of $\theta$ in $D$ other than $H$. Note that $M_1$ and $M_2$ contain no $(-1)$-curves. If $M_1$ and $M_2$ are in separate fibers, then both fibers will contain an affine line $\A^1$ for the fibration on $\s-D$, which is a contradiction to \cite[Lemma 2.10]{miya91}. Hence $M_1$ and $M_2$ lie in the same singular fiber $F_0$ of $f$. By \cite[Lemma 2.10]{miya91}, the fiber $F_0$ is linear with a unique $(-1)$-component separating $M_1$ and $M_2$. We see from \cite[Lemma 2.9]{miya91} that $E$ lies on a singular fiber of $f$ other than $F_0$. 
%We will derive a contradiction by showing that $E$ is contained in $F_0$. 
By well-known properties of a singular fiber of a $\p^1$-fibration,  $M_1$ and $M_2$  have the same determinant (up to sign).   On the other hand, after contraction of the $(-1)$-curve and one $(-2)$-curve in the fiber
at infinity, $\theta$ becomes a $(-1)$-curve and together with $M_1$ and $M_2$ it
contracts to a smooth point on the original $\Q$-homology projective plane $S$. This
implies that the intersection matrix of the union of $M_1$, $M_2$ and the $(-1)$-curve is
unimodular. By \cite[Lemma 6.1]{gs89},   the determinants of $M_1$ and $M_2$ are
pairwise coprime, which is a contradiction. \\ 
%This contradiction shows that $E$ is contained in $F_0$.\\

Now we show that $\theta$ has only one branch, that is, $H$. In other words, $T_3$ is irreducible. Let $M_1$ be the branch of $\theta$ other than  $H$. Thus there is an open subset of $\s-D-E$ which is a $\C^*$-bundle over $\C^*$, hence, by Proposition \ref{bundle}, $\sk(\s-D-E)=0$. Let $F_{M_1}$ be the fiber containing $M_1$. This fiber meets $\theta$ in only one point. There is a $(-1)$-curve in $F_{M_1}$ which meets $M_1$. We blow this curve down and the subsequent $(-1)$-curves until the image of $D$ is minimally SNC. By Lemma \ref{Fujita2}, $F_{M_1}$ maps to a fiber of the type $[2,1,2]$ and $\theta$ maps to a $(-3)$-curve as it is not affected during the process of blowing down.\\

To see that $\theta$ is not affected during the blowing down, note that there is a unique $(-1)$-curve in $F_{M_1}$ by  \cite[Lemma 2.10]{miya91}. By Lemma 2.2, this $(-1)$-curve meets a tip of $E$ which is a cyclic quotient singularity. If we blow down this $(-1)$-curve and continue to blow down until Bk $D=N$, then by Lemma \ref{Fujita2} we see that the image of $F_{M_1}$ is [2,1,2]. Here the $(-1)$-curve is the image of a component $U$ of $D$ and the two $(-2)$-curves are images of curves in $D$ whose self intersection is also $-2$ as no curve adjacent to them could have been blown down. But $D$ is obtained by resolving a cusp of a rational curve SNC minimally, so $D-C'$ can be blown down to a smooth point. Hence the image of $U$ cannot have two adjacent $(-2)$-curves. %This shows that the component of $F_{M_1}$ which meets $\theta$ has branching number 2 in the fiber. 
Hence the self-intersection of $\theta$ remains unaffected under the blow downs.\\

If we blow down the singular fibers of the resulting $\p^1$-fibration to irreducible curves, the image of $\theta$ is a smooth 2-section with self-intersection 1. By elementary properties of ruled surfaces this is not possible. Thus, $\theta$ has only one branch.\\

This shows that 
the number of singular fibers of the $\p^1$-fibration $f$ is exactly two. Hence we have an open subset of $\s-D-E$ which is a twisted 
$\C^*$-bundle over $\C^*$.  Thus, by Proposition 2.6, $\sk(\s-D-E)=0$.\\
 
\iffalse
Let $M_1$ be the branch of $\theta$ other than 
the one containing $H$. There is a $(-1)$-curve meeting $M_1$. We can blow down this $(-1)$-curve and subsequent 
$(-1)$-curves to obtain an irreducible fiber of $f$. 
\fi

Hence $D$ has a unique branching component, i.e. $H$. Moreover, the image 
of $H$ has three branches such that $\Sigma 1/d_i > 1$. Because $\sk(\s-D-E)=0$ we get $\sk(\s-D)=0$. This is impossible by Lemma \ref{Fujita2}.\\

Case 2. C is bi-cuspidal.\\

We begin with a description of $D$ when $C$ is bi-cuspidal. The 
branching number of $C'$ is two. It is adjacent to two $(-1)$-curves $H_1$ and $H_2$ both having branching number three. 
By $B_{11}$ and $B_{12}$ we denote the branches of $H_1$ other than $C'$. By $B_{22}$ and $B_{21}$ we denote the 
branches of $H_2$ other than $C'$. We can assume $B_{11}$ and $B_{22}$ to be linear. Both branches   of $C'$ can be blown down to smooth points.  \\

Recall that we have a twisted $\mathbb{C}^*$-fibration $f$ on $\widetilde{S}-D-E$. 
We claim that the fibration $f$ has no base point on $D$. Indeed, if otherwise, as we argued in Case 1, the base point of $f$ on $D$ have to be unique because $f$ is twisted.  Also,  the base point of $f$ has to lie on one of the $(-1)$-curves with branching number three. This forces the other $(-1)$-curve with branching number 
three to be in a singular fiber and have branching number three in the fiber. This is not possible.  \\

Now that $f$ has no base points on $D$, by \cite[Lemma 4.1]{palka12}, there is a fiber at infinity  of  type $[2,1,2]$. This is impossible by the description of $D$ above.
\end{proof}

\section{\bf Examples}\label{example}

In this section we give three examples of pairs $(S,C)$ such that $\sk(S-C)<2$.

\subsection{Example A}\label{ExA}
We start with the Hirzebruch surface $\Sigma_0$. We fix a fiber $C'$ of one of the $\p^1$-fibrations, say $f$, and a 
cross-section $H_0$ on $\Sigma_0$. 
By choosing two other fibers of $f$ we blow up suitably to produce linear chains of the type $[4,2,1,3,2,2]$ and
$[2,2,1,3]$ such that the proper transform $H$ of
$H_0$  a $(-1)$-curve. We contract the subchains $[3,2,2]$ and $[3]$ in these chains.   Let $S$ be the surface obtained by blowing down $H$ and the two linear chains $[4,2]$ and
$[2,2]$ to a smooth point on a normal surface $S$. By keeping track of the rank of the Picard group of the surface, we see that it is a $\Q$-homology projective plane. The image of $C'$ in $S$ is a unicuspidal rational curve $C$ contained
in $S^0$. Both singular points of $S$ are not rational double points. One can see that $\sk(\s-D)=-\infty$ and $\sk(\s-D-E)=-\infty$.

\subsection{Example B}\label{ExB}
We start with the Hirzebruch Surface $\Sigma_1$. Let $E$ be the minimal section with negative self-intersection. 
There exists a smooth rational curve $C$ which meets every fiber twice and is disjoint from $E$. Let $F_1$ and 
$F_2$ be the fibers that meet $C$ tangentially. Let $p_1$ and $p_2$ be the points of intersection of $F_1$ and $F_2$ with $E$ respectively. We perform elementary transforms by blowing up at $p_1$ and $p_2$ and then blowing down 
the proper transforms of $F_1$ and $F_2$. In this process $C$ acquires two cusps and $E^2=-3$. Now blow down $E$ to 
a singular point. This makes $S$ a $\Q$-homology projective plane. One can see that  $\sk(\s-D)=0$  and $\sk(\s-D-E)=1$.

\subsection{Example C}\label{ExC}
Consider a cubic $C$ on $\C\p^2$ with a single cusp. Let $T$ be the tangent line to $C$ at the cusp. Let $L$ be 
the tangent to $C$ at a smooth point of inflection. Let $p= T \cap L$, clearly $p$ is not on $C$. We blow up at $p$. 
We have a fibration with the proper transforms of $T$ and $L$ are fibers. Let $q$ be the point where the proper 
transform $L'$ of $L$ and $E$ meet. Here $E$ is the exceptional curve. We perform elementary transforms by blowing up 
at the point $q$ and blowing down the proper transform of $L$. Thus, $C$ has two cusps. One can see that  $\sk(\s-D)=1$ and $\sk(\s-D-E)=1$. As above, keeping track of the rank of the Picard group, we can conclude that $S$ is a $\Q$-homology projective plane.

\section*{Acknowledgement}
DongSeon Hwang was supported by Basic Science Research Program through the National Research Foundation of Korea(NRF) funded by the Ministry of Education(NRF-2015R1D1A1A01060179). The authors would like to thank the referee for the careful reading and suggestions.

\bibliographystyle{plain}
\bibliography{biblio}
\vspace{1em}
R.V. Gurjar\\
Tata Institute of Fundamental Research, Mumbai.\\
gurjar@math.tifr.res.in\\
Current Address:\\
Dept. of Mathematics, IIT Bombay, Powai, Mumbai 400076, India.\\
gurjar@math.iitb.ac.in\\

DongSeon Hwang\\
Department of Mathematics, Ajou University, Suwon 16499, Republic of Korea.\\
dshwang@ajou.ac.kr\\

Sagar Kolte\\
Tata Institute of Fundamental Research, Mumbai.\\
sagar@math.tifr.res.in\\
Current Address:\\
Credit Suisse, Delphi Building, Powai, Mumbai 400076, India.\\
sagar.kolte@credit-suisse.com

\end{document}